\documentclass[10pt,twoside]{article}
\usepackage{amscd,amsfonts,amsmath,amssymb,amstext,amsthm}
\usepackage{mathpazo}
\usepackage{hyperref}

\textheight=20cm

\def\RR{{\Bbb R}}
\def\CC{{\Bbb C}}

\def\ZZ{{\Bbb Z}}

\def\PP{{\Bbb P}}

\def\ra{\rightarrow}

\newtheorem{theorem}{Theorem}[section]
\newtheorem{lemma}[theorem]{Lemma}
\newtheorem{proposition}[theorem]{Proposition}
\newtheorem{corollary}[theorem]{Corollary}

\theoremstyle{remark}
\newtheorem{remark}[theorem]{Remark}

\newcommand{\hq } {{/\kern -.185em/}}
\newcommand{\he } {{\kern -.050em\sim _H \kern -.050em }}
\newcommand{\eq } {{\kern -.100em\sim \kern -.100em}}
\newcommand{\eqs } {{\kern -.100em\sim }}
%

%


  \renewenvironment{thebibliography}[1]{%
    \begin{oldthebibliography}{#1}%
      \setlength{\parskip}{0.3ex}%
      \setlength{\itemsep}{0.3ex}%
  }%
  {%
    \end{oldthebibliography}%
}

\begin{document}
\title{Infinite-dimensionality of the Automorphism Groups of Homogeneous Stein Manifolds}

\author{Alan Huckleberry \& Alexander Isaev}

\date{}

\maketitle

\sloppy
\parindent0ex
\parskip1.2ex
\pagenumbering {arabic}

\thispagestyle{empty}

\pagestyle{myheadings}
\markboth{Alan Huckleberry and Alexander Isaev}{Automorphism Groups of Homogeneous Stein Manifolds}

\begin{abstract} \noindent We show that the group $\hbox{Aut}_{\mathcal O}(X)$ of holomorphic automorphisms of a Stein manifold $X$ with $\hbox{dim}\,X\ge 2$ is infinite-dimensional, provided $X$ is a homogeneous space of a holomorphic action of a complex Lie group.
\end{abstract}

\section{Introduction}
\setcounter{equation}{0}

For a connected complex manifold $X$ let $\hbox{Aut}_{\mathcal O}(X)$ denote the group of holomorphic automorphisms of $X$. The objective of this paper is to show that for a large class of manifolds the group $\hbox{Aut}_{\mathcal O}(X)$ is infinite-dimensional in the sense that the Lie algebra $V_{\mathcal O}^c(X)$ generated by complete holomorphic vector fields on $X$ is infinite-dimensional (we think of a complete holomorphic vector field on $X$ as a complete vector field whose global flow consists of holomorphic transformations of $X$). This condition is equivalent to the impossibility of introducing on $\hbox{Aut}_{\mathcal O}(X)$ a topology with respect to which it becomes a Lie transformation group as defined by Palais (see \cite{P}, p.p. 99, 103). In particular, if $V_{\mathcal O}^c(X)$ is infinite-dimensional, $\hbox{Aut}_{\mathcal O}(X)$ is not a Lie group in the compact-open topology and not even locally compact (see \cite{MZ}, p. 208).  

We note that for large classes of complex manifolds $\hbox{Aut}_{\mathcal O}(X)$ is known to be a Lie group in the compact-open topology. If $X$ is Kobayashi-hyperbolic, then $\hbox{Aut}_{\mathcal O}(X)$ is a real Lie group which is never complex unless it is discrete \cite{Ko}, \cite{Ka}. On the other hand, if $X$ is compact, then $\hbox{Aut}_{\mathcal O}(X)$ is a complex Lie group. In the present paper we consider manifolds homogeneous with respect to holomorphic actions of complex Lie groups. Observe that any such manifold is far from being hyperbolic, since every two points in it can be joined by a finite chain of orbits of complex 1-parameter subgroups. In addition, we assume the manifolds to be Stein, in particular ruling out the compact case. Our main result is the following theorem.

\begin{theorem}\label{main} Let $X$ with $\hbox{dim}\,X\ge 2$ be a connected Stein manifold. If $X$ is a homogeneous space $G/H$ of a holomorphic action of a connected complex Lie group, then $V_{\mathcal O}^c(X)$ is infinite-dimensional.
\end{theorem}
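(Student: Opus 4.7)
The plan is to reduce the theorem, via two elementary observations, to producing a \emph{single} pair consisting of a non-zero complete holomorphic vector field $V$ on $X$ and a non-constant $f\in\mathcal{O}(X)$ with $V(f)\equiv 0$. The first observation is that the rescaled field $fV$ is automatically complete: since $f$ is constant along $V$-trajectories, the prescription
\[
\psi_t(x):=\varphi_{t\,f(x)}(x),
\]
where $\varphi_s$ denotes the flow of $V$, defines a holomorphic $\mathbb{C}$-action on $X$ whose infinitesimal generator is $fV$. The second observation is that the powers $1,f,f^2,\dots$ are linearly independent in $\mathcal{O}(X)$ --- a non-constant holomorphic function on a connected complex manifold has open, hence infinite, image by the open mapping theorem --- and therefore so are the complete fields $V,\,fV,\,f^2V,\dots$ in $V_{\mathcal O}^c(X)$. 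One suitable pair $(V,f)$ accordingly yields infinite-dimensionality.

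To produce such a pair I would set $V=V_\xi$ for some $\xi$ in the Lie algebra $\mathfrak{g}$ of $G$; every such $V_\xi$ is automatically complete, with flow $(t,x)\mapsto\exp(t\xi)\!\cdot\!x$. Because $\hbox{dim}\,X\ge 2$ while each $V_\xi$ is pointwise at most one-dimensional, the regular locus of $V_\xi$ is foliated by $V_\xi$-orbits and admits transverse holomorphic invariants of codimension $\ge 1$ locally, via the holomorphic flow-box theorem. I would then choose $\xi$ lying in a one-parameter subgroup $J\cong\mathbb{C}$ of $G$ --- an ``additive'' direction, obtained for instance from a non-zero nilpotent element of $\mathfrak{g}$ --- so that the flow of $V_\xi$ is unipotent-like and its orbits are as well behaved as possible. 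The Stein hypothesis on $X$ (through Cartan's Theorems A and B) is then the natural tool for promoting a local transverse invariant to a global holomorphic $f$ with $V_\xi(f)=0$.

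The main obstacle is precisely this promotion step: for arbitrary $\mathbb{C}$- and $\mathbb{C}^*$-actions on Stein manifolds of dimension $\ge 2$ one cannot in general rule out the degenerate case in which every holomorphic invariant is constant (as happens already for weighted linear $\mathbb{C}^*$-actions on $\mathbb{C}^2$ with weights of the same sign). Overcoming this degeneracy requires genuine exploitation of the homogeneity of $X$, and I expect the argument to branch according to the structure of $G$: when $G$ possesses a unipotent subgroup acting non-trivially on $X$ the additive/nilpotent construction above should go through directly, whereas when $G$ is essentially reductive one can appeal to the known structure theory of Stein homogeneous manifolds to fiber $X$ holomorphically over a Stein homogeneous base $B$ of strictly smaller dimension, whose non-constant holomorphic functions pull back to give invariants for the fiber-preserving one-parameter subgroups. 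Carrying out this case analysis uniformly appears to be the main technical content of the theorem.
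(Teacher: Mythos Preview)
Your reduction to finding a single pair $(V,f)$ with $V$ complete and $V(f)=0$ is exactly the mechanism the paper uses: its Proposition~2.1 builds the automorphisms $F_{f_\tau}(z)=\exp(\tau\varphi(\sigma(z)))\cdot z$, and for a one-parameter subgroup this is precisely your flow $\psi_t(x)=\varphi_{t\,g(x)}(x)$ with $g=\varphi\circ\sigma$ an invariant function. So the framework is right, and you have correctly located the only genuine difficulty --- producing the non-constant invariant $f$.

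The gap is in how you propose to resolve that difficulty. Cartan's Theorems A and B will not promote a local flow-box invariant to a global one: the sheaf $\ker(V_\xi:\mathcal{O}_X\to\mathcal{O}_X)$ is a sheaf of $\mathbb{C}$-algebras, not a coherent $\mathcal{O}_X$-module, so Theorem~B says nothing about it. There is no general principle guaranteeing that a holomorphic $\mathbb{C}$-action on a Stein manifold of dimension $\ge 2$ has non-constant global invariants, and your phrase ``should go through directly'' for the unipotent case is exactly where the argument is missing. Moreover, your proposed dichotomy (unipotent available versus $G$ essentially reductive) is essentially inverted relative to what actually works.

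The paper's resolution runs as follows. In the \emph{non-solvable} case one does \emph{not} use a nilpotent $\xi$; one takes a maximal torus $T\simeq\mathbb{C}^*$ inside an $SL_2(\mathbb{C})\subset G$ and invokes Snow's theorem that the categorical quotient $X/\!\!/T$ of a Stein manifold by a reductive group is again Stein, with $\pi^*:\mathcal{O}(X/\!\!/T)\to\mathcal{O}(X)^T$ an isomorphism. The work is then to show $X/\!\!/T$ is positive-dimensional, which is done by exhibiting a one-dimensional closed $T$-orbit (Lemma~3.1). In the \emph{solvable} case --- precisely where your ``additive'' directions live --- categorical-quotient machinery is unavailable, and the paper instead proves a structural result (Proposition~4.1): one can always find a closed subgroup $I$ with $H\subsetneq I\subsetneq G$ such that $G/I$ is positive-dimensional Stein and either $G/I\cong\mathbb{C}^m$ or $G'\subset I$ (making $I$ normal). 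The invariant functions are then simply the pullbacks from the base $G/I$. Establishing this fibration occupies most of the paper and requires the Tits fibration, Chevalley's theorem on algebraicity of $G'$, Malcev--Matsushima theory of nilpotent lattices, and the Hochschild--Mostow algebraic hull; it is far from a direct consequence of the Stein hypothesis.
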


If $X=G$ is a complex algebraic group (here $H$ is trivial), the infinite-dimensionality of $V_{\mathcal O}^c(X)$ follows from Theorem 3 of \cite{KK2} which states that in this case $V_{\mathcal O}^c(X)$ is dense in the infinite-dimensional Lie algebra $V_{\mathcal O}(X)$ of all holomorphic vector fields on $X$. This density property is much stronger than the assertion of Theorem \ref{main}. However, even in the case when Theorem 3 of \cite{KK2} applies, we present new and quite explicit methods for constructing a large number of complete holomorphic vector fields on $X$. Other density results can be found in \cite{KK1}, \cite{TV}, \cite{Varo1}, \cite{Varo2}, \cite{Varo3}. In particular, it follows from Corollary 6.6 of \cite{Varo1} that if $X$ is as in Theorem \ref{main}, then $V_{\mathcal O}^c(X\times\CC)$ is dense in the infinite-dimensional Lie algebra $V_{\mathcal O}(X\times\CC)$.  

Results on the infinite-dimensionality of the automorphism groups of certain complex manifolds homogeneous under actions of real Lie groups can be found in \cite{DZ}. We also note that it was shown in \cite{Wi} that the Stein domain $D:=\{(z,w)\in\CC^2: zw\ne 1\}$ is  $\hbox{Aut}_{\mathcal O}(D)$-homogeneous, but no connected Lie group (real or complex) acts transitively on $D$ by holomorphic transformations (cf. \cite{Ka}, Satz 4.13). Furthermore, the Lie algebra $V_{\mathcal O}^c(D)$ is infinite-dimensional, and thus the converse to Theorem \ref{main} does not hold in general.  

The collection of manifolds covered by Theorem \ref{main} is quite large. For example, every simply-connected complex Lie group $G$ is Stein (indeed, writing the Levi-Malcev decomposition $G=R\ltimes S$, where $R$ is the radical of $G$ and $S$ is the semi-simple Levi subgroup, we observe that $S$ is a linear algebraic group and $R$, being a simply-connected solvable group, is biholomorphic to some $\CC^k$).  Further, if $G$ is a complex reductive group and $H$ is a complex reductive subgroup then $G/H$ is Stein \cite{Mat2}, \cite{Mat3}, \cite{O} (see Remark \ref{algebraic} in Section \ref{methods}). Another class of examples arises when $G$ is nilpotent. In this case $G/H$ is Stein provided it is holomorphically separable \cite{GH} (for other sufficient conditions for $G/H$ to be Stein see \cite{Mat3}). 

Perhaps the best-known example of a Stein manifold with infinite-dimensional automorphism group is $\CC^n$ for $n\ge 2$. Observe that the complex Abelian group $(\CC^n,+)$ acts holomorphically and transitively on $\CC^n$ by translations. A less-known example is given by affine quadrics. Let $Q_n$ be the quadric in the complex projective space $\PP^{n+1}$ and $Q_{(n)}$ the corresponding affine quadric in $\CC^{n+1}$ obtained by deleting from $Q_n$ its intersection with the hyperplane at infinity. While the group of holomorphic automorphisms $\hbox{Aut}_{\mathcal O}(Q_n)$ of $Q_n$ is the complex Lie group $PO_{n+2}(\CC)$ (the quotient of the complex orthogonal group $O_{n+2}(\CC)$ by its center), the automorphism group $\hbox{Aut}_{\mathcal O}(Q_{(n)})$ of $Q_{(n)}$ is infinite-dimensional if $n\ge 2$. Indeed, writing the equation of $Q_{(n)}$ as
$$
z_1z_2+\sum_{j=3}^{n+1}z_j^2=1,
$$
one can easily verify that the following algebraic map lies in $\hbox{Aut}_{\mathcal O}(Q_{(n)})$ for every polynomial $P(z_2)$:

$$
\begin{array}{lll}
z_1&\mapsto& z_1-P(z_2)^2z_2-2P(z_2)z_3,\\
z_2&\mapsto&z_2,\\
z_3&\mapsto& P(z_2)z_2+z_3,\\
z_j&\mapsto& z_j,\quad 3<j\le n+1.
\end{array}
$$   
Such maps form a subgroup of $\hbox{Aut}_{\mathcal O}(Q_{(n)})$ isomorphic to the Abelian group of all polynomials in one complex variable, and thus the Lie algebra $V_{\mathcal O}^c(Q_{(n)})$ is infinite-dimensional. Furthermore, it follows from the results of \cite{KK1} that $V^c_{\mathcal O}(Q_{(n)})$ is dense in $V_{\mathcal O}(Q_{(n)})$. Observe also that $Q_{(n)}$ is a homogeneous space of the linear action of the complex reductive group $SO_{n+1}(\CC)$. It must be stressed here that we treat $Q_{(n)}$ as a complex rather than algebraic manifold and are interested in its full group of holomorphic automorphisms. Results on the finite- and infinite-dimensionality of the groups of algebraic automorphisms of affine quadrics over arbitrary fields can be found in the recent paper \cite{To} and references therein.

The paper is organized in follows. In Section \ref{methods} we outline a general approach to constructing a large number of 1-parameter subgroups of automorphisms for a manifold that is, firstly, endowed with a holomorphic action of a complex Lie group and, secondly, can be fibered over a complex space that admits non-constant holomorphic functions. In Sections \ref{case1} and \ref{case2} these methods are applied to specific fibrations to yield Theorem \ref{main} for non-solvable and solvable groups, respectively.   

{\bf Acknowledgement.} This work was initiated while the first author was visiting the Australian National University.

\section{General Methods}\label{methods}
\setcounter{equation}{0}

Let $Z$ be a connected complex manifold of dimension at least 2 endowed with a holomorphic action of a connected complex Lie group $L$. In this section we introduce general tools for constructing automorphisms of $Z$ other than those directly arising from the $L$-action. 

We start with the following simple proposition, where for two complex spaces $Y_1, Y_2$ we denote by $\hbox{Hol}(Y_1,Y_2)$ the  set of all holomorphic maps from $Y_1$ into $Y_2$.

\begin{proposition}\label{general} Let $Y$ be a complex space and $\sigma:Z\rightarrow Y$ an $L$-invariant holomorphic map. For every $f\in \hbox{Hol}(Y,L)$ define $F_f$ to be the following self-map of $Z$:
\begin{equation}
F_f(z):=f(\sigma(z))z,\quad z\in Z.\label{autom}
\end{equation}
Then $F_f\in\hbox{Aut}_{\mathcal O}(Z)$.
\end{proposition}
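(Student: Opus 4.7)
The proof is a direct verification, and the plan is essentially: (i) check $F_f$ is holomorphic; (ii) exploit the $L$-invariance of $\sigma$ to construct a two-sided holomorphic inverse via pointwise inversion in $L$.

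First I would observe that $F_f$ is manifestly holomorphic, since it is the composition of the holomorphic maps $z \mapsto (\sigma(z),z)$, $(y,z)\mapsto (f(y),z)$, and the action map $L\times Z \to Z$, $(\ell,z)\mapsto \ell\cdot z$. So the only real content is bijectivity with holomorphic inverse.

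The key observation is that $L$-invariance of $\sigma$ means $\sigma(\ell\cdot z)=\sigma(z)$ for every $\ell\in L$ and $z\in Z$; in particular, applying this to $\ell = f(\sigma(z))$ gives $\sigma(F_f(z))=\sigma(z)$. This is precisely what is needed to invert the construction. Define $g\in\hbox{Hol}(Y,L)$ by $g(y):=f(y)^{-1}$; this is holomorphic since inversion in $L$ is. I would then compute
\begin{equation*}
F_g(F_f(z)) \;=\; g(\sigma(F_f(z)))\cdot F_f(z) \;=\; f(\sigma(z))^{-1}\cdot f(\sigma(z))\cdot z \;=\; z,
\end{equation*}
and symmetrically $F_f\circ F_g = \mathrm{id}_Z$. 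Thus $F_f$ is a holomorphic bijection with holomorphic inverse $F_g$, so $F_f\in\hbox{Aut}_{\mathcal O}(Z)$.

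There is no real obstacle here; the only place one must be careful is in noticing that the $L$-invariance of $\sigma$ is used twice — once to ensure $\sigma(F_f(z))=\sigma(z)$ so that the candidate inverse evaluates $f$ at the correct point of $Y$, and implicitly in the fact that the formula (\ref{autom}) makes sense for an arbitrary holomorphic $f:Y\to L$ without any compatibility condition between $f$ and the action, precisely because $\sigma$ only sees the $L$-orbit of $z$.
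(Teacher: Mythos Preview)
Your proof is correct and follows exactly the paper's approach: the paper also notes that $F_f$ is clearly holomorphic and then exhibits $z\mapsto [f(\sigma(z))]^{-1}z$ as the inverse, using the $L$-invariance of $\sigma$. Your version is simply more detailed in spelling out why $\sigma(F_f(z))=\sigma(z)$ and why the candidate inverse is holomorphic.
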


\begin{proof} Clearly, for any $f$ the map $F_f$ is holomorphic. Furthermore, the $L$-invariance of $\sigma$ implies that the map
$$
z\rightarrow [f(\sigma(z))]^{-1}z
$$
is the inverse of $F_f$. The proof is complete.
\end{proof}
  
Let ${\mathfrak l}$ be the Lie algebra of $L$. In the situation of Proposition \ref{general}, for every map $\varphi\in\hbox{Hol}(Y,{\mathfrak l})$ we consider the real 1-parameter family of maps $f_{\tau}:=\exp(\tau\varphi)$ in $\hbox{Hol}(Y,L)$. This family gives rise to the corresponding 1-parameter family 
$\{F_{f_{\tau}}\}\subset \hbox{Aut}_{\mathcal O}(Z)$ as defined in (\ref{autom}). One immediately observes that $\{F_{f_{\tau}}\}$ is a 1-parameter subgroup of $\hbox{Aut}_{\mathcal O}(Z)$ and that the corresponding action map 
$$
\RR\times Z\ra Z,\quad (\tau,z)\mapsto F_{f_{\tau}}(z)
$$
is real-analytic. Therefore, $\{F_{f_{\tau}}\}$ gives rise to a complete holomorphic vector field $v_{\varphi}$ on $Z$, and we obtain a continuous linear map $\Phi^L: \varphi\mapsto v_{\varphi}$ from $\hbox{Hol}(Y,{\mathfrak l})$ into $V_{\mathcal O}^c(Z)$, where both vector spaces are considered with the compact-open topology.

We will now study the injectiviy properties of the map $\Phi^L$. For $a\in{\frak l}$ we denote by $\hat a$ the corresponding complete holomorphic vector field on $Z$ generated by the $L$-action, that is,
$$
\hat a(z):=\displaystyle\frac{d}{dt}\bigl(\exp(-ta)z\bigr)|_{t=0}, \quad z\in Z.
$$
Further, for $y\in Y$ let $F_y:=\sigma^{-1}(y)$ be the $\sigma$-fiber over $y$, and let ${\mathfrak l}_y$ be the following ideal in ${\mathfrak l}$
$$
{\mathfrak l}_y:=\{a\in{\mathfrak l}: \hbox{$\hat a\equiv 0$ on $F_y$}\}.
$$
In terms of these ideals the kernel $\hbox{ker}\,\Phi^L$ of the map $\Phi^L$ is described as follows:
\begin{equation}
\hbox{ker}\,\Phi^L=\left\{\varphi\in\hbox{Hol}(Y,{\mathfrak l}): \varphi(y)\in{\mathfrak l}_y,\,\,\hbox{for all $y$ in $Y$}\right\}.\label{kerform}
\end{equation}

Assume now that the map $\sigma$ is surjective. For $z\in Z$ let $\hbox{rank}_z\,\sigma:=\hbox{codim}_z\,F_{\sigma(z)}$ and $Z':=\{z\in Z:\hbox{rank}_z\,\sigma=s\}$, where $s:=\hbox{dim}\,Y$. By Satz 17 in \cite{R}, the set $Z'$ is analytic Zariski open in $Z'$ (a complement to a closed complex analytic subset of $Z'$), and $\sigma$ maps $Z'$ onto the open subset $Y':=\sigma(Z')$ of $Y$. For $z\in Z'$ define $d_z^L:=\hbox{dim}\,{\mathfrak l}_{\sigma(z)}$.

The following proposition is the main result of this section.

\begin{proposition}\label{semicont} Let $Z$, $L$, $Y$ and $\sigma$ be as in Proposition \ref{general}, and suppose additionally that $\sigma$ is surjective. Then the function $d_z^L$ is upper semicontinuous on $Z'$, i.e., for every integer $k\ge 0$ the set $D_k:=\{z\in Z': d_z^L\ge k\}$ is closed in $Z'$.
\end{proposition}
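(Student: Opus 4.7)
My plan is to prove closedness of $D_k$ in $Z'$ by a sequential limit argument combined with a Grassmannian compactness step. Take a sequence $(z_n)\subset D_k$ with $z_n\to z_0\in Z'$, so each $\dim\mathfrak{l}_{\sigma(z_n)}\geq k$; I wish to show $z_0\in D_k$. Choose $k$-dimensional subspaces $V_n\subseteq\mathfrak{l}_{\sigma(z_n)}$. Since the Grassmannian $\mathrm{Gr}_k(\mathfrak{l})$ is compact, after passing to a subsequence I obtain $V\in\mathrm{Gr}_k(\mathfrak{l})$ with $V_n\to V$. It then suffices to prove $V\subseteq\mathfrak{l}_{\sigma(z_0)}$.

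Fix $a\in V$ and choose $a_n\in V_n$ with $a_n\to a$. Linearity of $b\mapsto \hat b$ gives $\hat a_n\to\hat a$ uniformly on compact subsets of $Z$. I must show $\hat a\equiv 0$ on $F_{\sigma(z_0)}$. The essential local step exploits $z_0\in Z'$: in a neighborhood $U$ of $z_0$, $\sigma$ has constant rank $s$ and is therefore a holomorphic submersion, so for every $w\in F_{\sigma(z_0)}\cap U$ the holomorphic implicit function theorem supplies a local holomorphic section $\gamma$ of $\sigma$ defined near $\sigma(z_0)$ with $\gamma(\sigma(z_0))=w$. The points $w_n:=\gamma(\sigma(z_n))$ lie in $F_{\sigma(z_n)}$ for large $n$ and satisfy $w_n\to w$, so passing to the limit in $0=\hat a_n(w_n)\to\hat a(w)$ yields $\hat a(w)=0$. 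Thus $\hat a$ vanishes on the open set $F_{\sigma(z_0)}\cap U$ of $F_{\sigma(z_0)}$. The same submersion-and-limit argument, run at any other point of $F_{\sigma(z_0)}\cap Z'$, extends this to vanishing on all of $F_{\sigma(z_0)}\cap Z'$. Because every irreducible component of $F_{\sigma(z_0)}$ that meets $Z'$ has a smooth point in $Z'$ of the maximal dimension $n-s$, the identity principle for the closed analytic set $\{\hat a=0\}$ propagates the vanishing to every such irreducible component.

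The step I expect to be the main obstacle is eliminating possible irreducible components of $F_{\sigma(z_0)}$ contained entirely in the exceptional locus $Z\setminus Z'$ (which would necessarily have dimension strictly greater than $n-s$). I expect to handle this either by approximating such a component by a Hausdorff-type limit of pieces of nearby fibers $F_{\sigma(z_n)}$, so that the vanishing of $\hat a_n$ passes to $\hat a$ in the limit, or by using the $L$-invariance of the fibration to propagate the vanishing along $L$-orbits issuing from already-controlled points of $Z'$. Once $\hat a\equiv 0$ on the entire fiber $F_{\sigma(z_0)}$ is secured, we conclude $a\in\mathfrak{l}_{\sigma(z_0)}$, hence $V\subseteq\mathfrak{l}_{\sigma(z_0)}$ and $d_{z_0}^L\geq k$, completing the proof.
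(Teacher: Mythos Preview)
Your overall architecture matches the paper's: a compactness step on~$\mathfrak l$ to extract a limit, followed by a local ``transversal'' argument producing points $w_n\in F_{\sigma(z_n)}$ with $w_n\to w$ for a prescribed $w\in F_{\sigma(z_0)}$. Your Grassmannian extraction is a clean variant of the paper's device, which instead picks any $m-k+1$ elements of~$\mathfrak l$ and extracts, from a sequence of nontrivial dependence relations modulo~$\mathfrak l_{y_n}$, a limiting nontrivial relation modulo~$\mathfrak l_{y_0}$; the two packagings are equivalent.

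There is, however, a genuine gap in your local step. You assert that on $Z'$ the map $\sigma$ ``has constant rank $s$ and is therefore a holomorphic submersion'' and then invoke the implicit function theorem to get a local holomorphic section. Two problems: first, in this paper $Y$ is only a complex \emph{space}, so talking about submersions and sections of~$\sigma$ near a possibly singular point of~$Y$ is not straightforward; second, even when $Y$ is smooth, the rank used here is $\operatorname{rank}_z\sigma:=\operatorname{codim}_z F_{\sigma(z)}$, \emph{not} the rank of $d\sigma_z$, and these differ (e.g.\ $\sigma(z_1,z_2)=z_1^2$ has $\operatorname{rank}_{(0,z_2)}\sigma=1$ but $d\sigma_{(0,z_2)}=0$). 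So $Z'$ is not the submersion locus and the implicit function theorem is unavailable. The paper circumvents this entirely: at a given point $z_0\in F_{y_0}$ it chooses an $s$-dimensional submanifold $\Sigma$ with $\Sigma\cap F_{y_0}=\{z_0\}$, observes that $\sigma|_\Sigma$ then has discrete fibers, and applies Remmert's open-mapping theorem (Satz~19 of \cite{R}) to conclude $\sigma(\Sigma)$ is open in~$Y'$; this supplies $z_n\in\Sigma\cap F_{y_n}$ with $z_n\to z_0$. That transversal-plus-Remmert argument is the correct replacement for your section.

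On the obstacle you single out---irreducible components of $F_{\sigma(z_0)}$ contained in $Z\setminus Z'$---it is worth noting that the paper's proof simply fixes an \emph{arbitrary} $z_0\in F_{y_0}$ and runs the transversal argument there, without isolating this case. The construction of~$\Sigma$ with $\Sigma\cap F_{y_0}=\{z_0\}$ visibly requires $\dim_{z_0}F_{y_0}\le n-s$, i.e.\ $z_0\in Z'$, so the paper's treatment of this point is at least as terse as yours. Your proposed fixes (Hausdorff-limit of fibers, or $L$-invariance) are not worked out; note in particular that $Z'$ is $L$-invariant (since $\sigma$ is), so propagating along $L$-orbits will not leave~$Z'$.
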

 
\begin{proof} Let $a_1,\dots,a_{\ell}\in{\mathfrak l}$ and let $\{y_n\}$ be a sequence in $Y'$ converging to $y_0\in Y'$. Suppose further that for every $n$ there exists a non-zero vector $p_n=(p^1_n,\dots,p^{\ell}_n)\in\CC^{\ell}$ for which the sum $\sum_{j=1}^{\ell} p^j_n\hat a_j$ vanishes on $F_{y_n}$. Choosing the vectors $p_n$ in the unit sphere, we can assume that the sequence $\{p_n\}$ converges to some non-zero vector $p_0=(p^1_0,\dots,p^{\ell}_0)$.

Fix $z_0\in F_{y_0}$ and choose a closed connected submanifold $\Sigma$ of dimension $s$ in a neighborhood ${\mathcal V}$ of $z_0$, such that $\Sigma\cap F_{y_0}=\{z_0\}$. Since the set where $\hbox{rank}_z\,\sigma|_{\Sigma}$ takes the maximal value (equal to $s$) is open in $\Sigma$, by shrinking ${\mathcal V}$ we can assume that the intersection $\Sigma\cap F_{\sigma(z)}$ is discrete for all $z\in {\mathcal V}$. By Satz 19 of \cite{R}, the image $\sigma(\Sigma)$ is an open subset of $Y'$. Therefore, $\Sigma$ intersects the fiber $F_{y_n}$ for all sufficiently large $n$.

Choosing $z_n\in\Sigma\cap F_{y_n}$ we see that the sequence $\{z_n\}$ converges to $z_0$, hence the vector field $\sum_{j=1}^{\ell} p^j_0\hat a_j$ vanishes at $z_0$. Since $z_0$ was an arbitrary point of the fiber $F_{y_0}$, it follows that this vector field vanishes on all of $F_{y_0}$. 

Let $m:=\hbox{dim}\,{\mathfrak l}$ and let $\{w_n\}$ be a sequence in the set $D_k$ converging to a point $w_0\in Z'$. Clearly, for any $b_1,\dots, b_{m-k+1}\in{\mathfrak l}$ and any $n$ there exists a vector $q_n=(q^1_n,\dots,q^{m-k+1}_n)\in\CC^{m-k+1}$ such that $\sum_{j=1}^{m-k+1}q^j_n\hat b_j$ vanishes on $F_{\sigma(w_n)}$. As we showed above, there exists a non-zero vector $q_0=(q^1_0,\dots,q^{m-k+1}_0)$ such that $\sum_{j=1}^{m-k+1}q^j_0\hat b_j$ vanishes on $F_{\sigma(w_0)}$, hence $w_0$ lies in $D_k$.

The proof of the proposition is complete. \end{proof}

\begin{corollary}\label{openness} Let $Z$, $L$, $Y$ and $\sigma$ be as in Proposition \ref{semicont}, and let $\mu^L:=\hbox{min}_{z\in Z'}\,d_z^L$. Then the set $D^L:=\{z\in Z': d_z^L=\mu^L\}$ is open in $Z'$.
\end{corollary}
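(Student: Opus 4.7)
The plan is to deduce this immediately from the upper semicontinuity established in Proposition \ref{semicont}. The strategy is to realize $D^L$ as the complement in $Z'$ of one of the closed sets $D_k$.

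First I would observe that by the definition of $\mu^L$ as the minimum of $d_z^L$ on $Z'$, every $z \in Z'$ satisfies $d_z^L \geq \mu^L$. Consequently the condition $d_z^L = \mu^L$ is equivalent to $d_z^L < \mu^L + 1$, that is, to $z \notin D_{\mu^L+1}$. Hence
\[
D^L = Z' \setminus D_{\mu^L + 1}.
\]

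Next I would invoke Proposition \ref{semicont} with $k = \mu^L + 1$ to conclude that $D_{\mu^L + 1}$ is closed in $Z'$. Taking the complement in $Z'$ yields that $D^L$ is open in $Z'$, which is the desired statement.

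There is essentially no obstacle here: the corollary is a purely formal rephrasing of upper semicontinuity of an integer-valued function at its minimum value. The only minor point to note is that $\mu^L$ is well defined as a minimum (rather than merely an infimum) because $d_z^L$ takes values in the non-negative integers, so the infimum is attained; this guarantees that $D^L$ is nonempty, though nonemptiness is not actually needed for openness.
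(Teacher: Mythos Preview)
Your proof is correct and is exactly the immediate deduction the paper intends: the corollary is stated without proof precisely because $D^L = Z' \setminus D_{\mu^L+1}$ is open by Proposition~\ref{semicont}. There is nothing to add.
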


In the following proposition we apply the upper semicontinuity of $d_z^L$ to obtain the infinite-dimensionality of $V^c_{\mathcal O}(Z)$. This proposition will be used to prove Theorem \ref{main} in the subsequent sections.
  
\begin{proposition}\label{mainprop} Let $Z$, $L$, $Y$ and $\sigma$ be as in Proposition \ref{semicont}. Assume, furthermore, that the action of $L$ on $Z$ is almost effective and that $Y$ admits a non-constant holomorphic function. Then $V^c_{\mathcal O}(Z)$ is infinite-dimensional. 
\end{proposition}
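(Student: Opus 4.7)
The strategy is to exhibit an explicit infinite-dimensional subspace of $V^c_{\mathcal O}(Z)$ inside the image of the linear map $\Phi^L$. By the kernel description (\ref{kerform}), it suffices to locate a single $a\in{\mathfrak l}$ that fails to lie in ${\mathfrak l}_y$ for all $y$ in some non-empty open subset of $Y$, and then to scale it by powers of the given non-constant $h\in\hbox{Hol}(Y,\CC)$; the one-parameter families generated by $\varphi_n:=h^na$ should descend to linearly independent vector fields $v_n:=\Phi^L(\varphi_n)$ on $Z$.

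The crucial preliminary step is the strict inequality $\mu^L<m:=\hbox{dim}\,{\mathfrak l}$. If instead $\mu^L=m$, then by Corollary \ref{openness} the non-empty open set $D^L\subset Z'\subset Z$ satisfies ${\mathfrak l}_{\sigma(z)}={\mathfrak l}$ for every $z\in D^L$, so every fundamental vector field $\hat a$, $a\in{\mathfrak l}$, vanishes identically on $D^L$. Since $Z$ is connected, the identity principle then forces $\hat a\equiv 0$ on all of $Z$ for every $a\in{\mathfrak l}$, contradicting the almost effectiveness of the $L$-action. Hence one can choose $z_0\in D^L$ and $a\in{\mathfrak l}$ with $a\notin{\mathfrak l}_{\sigma(z_0)}$, so that the open set $W:=\{z\in Z:\hat a(z)\ne 0\}$ is non-empty.

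With $a$ and $W$ fixed, a direct computation from the definition of $\Phi^L$ gives $v_n(z)=-h(\sigma(z))^n\hat a(z)$ for every $z\in Z$ and every $n\ge 0$. To show that $\{v_n\}_{n\ge 0}$ is linearly independent, suppose $\sum_{n=0}^Nc_nv_n\equiv 0$; evaluating at any $z\in W$ and using $\hat a(z)\ne 0$ reduces the relation to the scalar identity $\sum_{n=0}^Nc_n\,h(\sigma(z))^n=0$. Now $h\circ\sigma$ is holomorphic on $Z$ and non-constant, since $\sigma$ is surjective and $h$ is non-constant on $Y$; the identity principle applied on the connected manifold $Z$ then ensures that $h\circ\sigma$ remains non-constant on the non-empty open set $W$, so its image there is infinite. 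Consequently the polynomial $\sum c_nt^n$ has infinitely many zeros, all $c_n$ vanish, and the family $\{v_n\}$ spans an infinite-dimensional subspace of $V^c_{\mathcal O}(Z)$.

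The principal obstacle is the step $\mu^L<m$: this is the only point at which the hypothesis of almost effectiveness is genuinely used, and it must be combined with the identity principle for holomorphic vector fields on connected manifolds together with the semicontinuity statement of Proposition \ref{semicont}. Once it is secured, the infinite-dimensionality of $V^c_{\mathcal O}(Z)$ reduces to the elementary fact that distinct powers of a non-constant holomorphic function are linearly independent.
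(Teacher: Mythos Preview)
Your proof is correct and follows the same essential idea as the paper's: reduce to a one-dimensional direction in ${\mathfrak l}$ and show that scaling it by holomorphic functions on $Y$ produces linearly independent complete vector fields. The paper does this by passing to a one-parameter subgroup $L_1\subset L$, observing that $\mu^{L_1}=0$, and then using the kernel description (\ref{kerform}) together with Corollary~\ref{openness} to conclude that $\Phi^{L_1}:{\mathcal O}(Y)\to V^c_{\mathcal O}(Z)$ is injective; you instead fix a single $a\in{\mathfrak l}$ and verify linear independence of the $v_n$ by the explicit formula $v_n(z)=-h(\sigma(z))^n\hat a(z)$.

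One remark worth making: the step you label as ``the principal obstacle'', namely $\mu^L<m$, is in fact unnecessary for your argument. Almost effectiveness already says that the map $a\mapsto\hat a$ has trivial kernel, so any nonzero $a\in{\mathfrak l}$ satisfies $\hat a\not\equiv 0$ and hence $W\ne\emptyset$; from that point on your explicit computation goes through without ever invoking Proposition~\ref{semicont} or Corollary~\ref{openness}. In this sense your route is actually more elementary than the paper's, which genuinely relies on the openness of $D^{L_1}$ to conclude that a function in $\ker\Phi^{L_1}$ vanishes identically. The trade-off is that the paper obtains injectivity of the whole map $\Phi^{L_1}$ on ${\mathcal O}(Y)$, whereas you only exhibit the linearly independent family $\{h^n\}_{n\ge 0}$; both suffice for the stated conclusion.
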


\begin{proof} Fix a complex 1-parameter subgroup $L_1$ in $L$ and consider the corresponding map $\Phi^{L_1}$. Since $L_1$ acts on $Z$ almost effectively, it follows that $\mu^{L_1}=0$. Therefore, by (\ref{kerform}) every function $\varphi\in\hbox{ker}\,\Phi^{L_1}$ vanishes on the set $D^{L_1}$. Since by Corollary \ref{openness} the set $D^{L_1}$ is open in $Z'$, the function $\varphi$ vanishes everywhere on $Y$, and we obtain that $\hbox{ker}\,\Phi^{L_1}$ is trivial.

Further, since $Y$ admits a non-constant holomorphic function, say $f$, the algebra ${\mathcal O}(Y)$ is infinite-dimensional (this can be seen by considering the compositions $h\circ f$, where $h\in{\mathcal O}(f(Y)))$. Thus, $\Phi^{L_1}$ is a continuous injective linear map from the infinite-dimensional vector space ${\mathcal O}(Y)$ into $V^c_{\mathcal O}(Z)$. Hence $V^c_{\mathcal O}(Z)$ is infinite-dimensional.
\end{proof}

\begin{remark}\label{arb} In the proof of Proposition \ref{mainprop} we observed that the map $\Phi^L$ is injective if $L$ is a 1-dimensional group acting effectively. In fact, under an additional assumption on $Y$, the following characterization of the size of the quotient $\hbox{Hol}(Y,{\mathfrak l})/\hbox{ker}\,\Phi^L$ holds for groups of an arbitrary dimension $m\ge 1$. Let $Z$, $L$, $Y$ and $\sigma$ be as in Proposition \ref{semicont}. Assume, furthermore, that the action of $L$ on $Z$ is almost effective and that there exist $f_1,\dots,f_s\in{\mathcal O}(Y)$ that give coordinates near a smooth point in $Y$. Then there exists a continuous linear surjective map $\Psi^L$ from $\hbox{Hol}(Y,{\mathfrak l})/\hbox{ker}\,\Phi^L$ onto a dense subset of ${\mathcal O}^{m-\mu^L}(V)$, where $V$ is a small neighborhood of a smooth point in $Y$ (note that $\mu^L<m$ since the action of $L$ is almost effective). If $L$ is 1-dimensional, then $\hbox{ker}\,\Phi^L$ is trivial and $\Psi^L$ is simply the restriction map ${\mathcal O}(Y)\ra{\mathcal O}(V)$. 
\end{remark}

We will apply Proposition \ref{mainprop} in situations of two kinds. In these situations the space $Y$ will be Stein. Suppose first that the manifold $Z=M/N$ is homogeneous under a holomorphic action of a connected complex Lie group $M$. If one can find a closed normal complex subgroup $N_1$ of $M$ containing $N$ then the quotient map $\Pi: M/N\ra M/N_1$ is $N_1$-invariant. If, furthermore, $M/N_1$ is a positive-dimensional Stein manifold, then $Y:=M/N_1$, $\sigma:=\Pi$ and $L:=N_1/K$, satisfy the assumptions of Proposition \ref{mainprop}, where $K$ is the ineffectivity subgroup of the $N_1$-action on $Z$. This line of argument will be used to prove Theorem \ref{main} for solvable groups $G$ in Section \ref{case2}.

Another situation of interest to us in which Proposition \ref{mainprop} will be applied comes from categorical quotients. Suppose that a connected complex Lie group $M$ acts on $Z$ holomorphically. For any complex subgroup $L$ of $M$ define $Z/\hspace{-0.15cm}/L$ to be the categorical quotient of $Z$ by $L$, that is, the quotient of $Z$ by the equivalence relation $\sim$, where $z_1\sim z_2$ if and only if $f(z_1)=f(z_2)$ for all $f$ in the algebra ${\mathcal O}(Z)^L$ of $L$-invariant holomorphic function on $Z$. We denote by $\pi: Z\ra Z/\hspace{-0.15cm}/L$ the quotient map and consider $Z/\hspace{-0.15cm}/L$ with the corresponding quotient topology. Assume now that $Z$ is a Stein manifold and $L$ a complex reductive group. It was shown in \cite{S} that under these assumptions the quotient $Z/\hspace{-0.15cm}/L$ can be given the structure of a Stein complex space in such a way that the map $\pi$ becomes holomorphic and $\pi^*:{\mathcal O}(Z/\hspace{-0.15cm}/L)\ra {\mathcal O}(Z)^L$ is an isomorphism. In the remainder of this section $Z$ is assumed to be a Stein manifold and $L$ a reductive complex Lie group.  

For our applications we will need the following four properties of reductive group actions on Stein manifolds and the corresponding categorical quotients. Proofs and further properties can be found in \cite{S} (see also \cite{HH}).

(P1) The $L$-action is of algebraic type, i.e., every $L$-orbit $L.z$ is open in its analytic Zariski closure in $Z$.

(P2) $z_1\sim z_2$ if and only if $\overline{L.z_1}\cap\overline{L.z_2}\ne\emptyset$. 

(P3) If $\psi: Z\ra Z_1$ is an $L$-invariant holomorphic map, then there exists a unique holomorphic map $\tau_{\psi}:Z/\hspace{-0.15cm}/L\ra Z_1$ such that $\psi=\tau_{\psi}\circ\pi$. If $\psi^*:{\mathcal O}(Z_1)\ra{\mathcal O}(Z)^L$ is an isomorphism, then $\tau_{\psi}$ is biholomorphic. 

(P4) For every $y\in Z/\hspace{-0.15cm}/L$ the fiber $\pi^{-1}(y)$ contains exactly one closed $L$-orbit (the unique orbit of minimal dimension).

To be able to apply Proposition \ref{mainprop} to $Y:=Z/\hspace{-0.15cm}/L$ and $\sigma:=\pi$, we only need to ensure that $Z/\hspace{-0.15cm}/L$ is positive-dimensional. Due to property (P4) a sufficient condition for the positive-dimensionality of $Z/\hspace{-0.15cm}/L$ is the existence of two closed $L$-orbits in $Z$. For $L\simeq\CC^*$ another sufficient condition is given by the following simple lemma.

\begin{lemma}\label{suffpositive} Assume that $L\simeq\CC^*$ and that $L$ has a 1-dimensional closed orbit in $Z$. Then $\hbox{dim}\,Z/\hspace{-0.15cm}/L>0$.
\end{lemma}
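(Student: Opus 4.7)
The plan is to proceed by contradiction: suppose $\dim Z/\hspace{-0.15cm}/L=0$. Because $Z/\hspace{-0.15cm}/L$ is Stein of dimension zero it must be discrete, and since $Z$ is connected and $\pi$ is continuous its image is a single point. Equivalently, every $L$-invariant holomorphic function on $Z$ is constant, so by (P2) any two orbit closures in $Z$ intersect.

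Let $O=L\cdot z_{0}$ be the given $1$-dimensional closed orbit, so that $\overline{L\cdot z_{0}}=O$. For an arbitrary $z\in Z$ I would apply (P2) to conclude $\overline{L\cdot z}\cap O\neq\emptyset$. Since this intersection is $L$-invariant, non-empty, and contained in the single orbit $O$, it must equal $O$; hence $O\subset\overline{L\cdot z}$. In particular $\overline{L\cdot z}$ has dimension at least $1$, so $z$ cannot be an $L$-fixed point, and $L\cdot z$ is itself $1$-dimensional and irreducible (as the image of the connected group $L$).

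Now I would invoke (P1): $L\cdot z$ is open in its analytic Zariski closure, which is therefore an irreducible analytic set of dimension $1$ that contains the $1$-dimensional closed analytic subset $O$. An irreducible analytic set admits no proper closed analytic subset of the same dimension, so $\overline{L\cdot z}^{\mathrm{Zar}}=O$; consequently $L\cdot z\subset O$, and two orbits that share a point are equal, giving $L\cdot z=O$ and $z\in O$. Thus $Z=O$, contradicting $\dim Z\ge 2$.

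The only mildly delicate step is the opening reduction from $\dim Z/\hspace{-0.15cm}/L=0$ to constancy of all $L$-invariant holomorphic functions on $Z$; once that is in place the argument is a short dimension comparison built on properties (P1) and (P2), with no obstacles of real substance.
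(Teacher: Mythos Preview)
Your proof is correct and takes essentially the same approach as the paper: both use (P1) and (P2) to show that the $\pi$-fiber containing the given closed $1$-dimensional orbit $O$ must equal $O$ itself (any other orbit in that fiber would have $O$ in its boundary and hence dimension greater than $1$, impossible for $L\simeq\CC^*$). The only difference is cosmetic---the paper argues directly that this fiber is a proper subset of $Z$ since $\dim Z\ge 2$, while you wrap the same computation in a contradiction by first reducing $\dim Z/\hspace{-0.15cm}/L=0$ to the statement that all of $Z$ lies in a single fiber.
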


\begin{proof} If for $z_0\in Z$ the orbit $L.z_0$ is 1-dimensional and closed, then by property (P4) it is the only closed $L$-orbit contained in the fiber $\pi^{-1}(\pi(z_0))$. By property (P1) the $L$-action is of algebraic type, hence any other $L$-orbit $O$ lying in $\pi^{-1}(\pi(z_0))$ is open in its analytic Zariski closure in $Z$ and by property (P2) contains $L.z_0$ in its boundary. In particular, $O$ must have dimension greater than 1, which is impossible since $L$ is 1-dimensional. Therefore, the fiber $\pi^{-1}(\pi(z_0))$ coincides with $L.z_0$. Since $\hbox{dim}\,Z\ge 2$, there exists another closed $L$-orbit in $Z$, and the proof is complete.
\end{proof}   

We will apply Proposition \ref{mainprop} in the situation of Lemma \ref{suffpositive} to prove Theorem \ref{main} for non-solvable groups $G$ in Section \ref{case1}.

\begin{remark}\label{algebraic} If $Z=M/N$ is a homogeneous space of a holomorphic action of a complex connected reductive Lie group $M$, then the following conditions are equivalent: (i) $Z$ is Stein, (ii) $Z$ is affine algebraic, (iii) $N$ is reductive, and in the affine algebraic realization of $Z$ the action of $M$ is algebraic (see \cite{Mat2}, \cite{Mat3}, \cite{O}, \cite{BH-C}, \cite{B-B}). In this case for a subgroup $L\simeq\CC^*$ of $M$ the classical algebraic invariant theory supplies all tools required to prove Theorem \ref{main}, as outlined below. 

Let ${\mathcal P}(Z)^L$ be the algebra of $L$-invariant regular functions on $Z$. Define an equivalence relation $\sim^{\hbox{\tiny alg}}$ on $Z$ by setting $z_1\sim^{\hbox{\tiny alg}} z_2$ if and only if $f(z_1)=f(z_2)$ for every $f\in{\mathcal P}(Z)^L$. Let $Z/\hspace{-0.15cm}/^{\hbox{\tiny alg}}L$ be the quotient of $Z$ by $\sim^{\hbox{\tiny alg}}$ endowed with the quotient topology. By a result due to Hilbert, the algebra ${\mathcal P}(Z)^L$ is finitely-generated (see, e.g.,\cite{Kr}, p. 95). Using this fact, one can introduce the structure of an affine algebraic variety on $Z/\hspace{-0.15cm}/^{\hbox{\tiny alg}}L$ in such a way that the quotient map $\pi^{\hbox{\tiny alg}}:Z\ra Z/\hspace{-0.15cm}/^{\hbox{\tiny alg}}L$ becomes an algebraic morphism, and $(\pi^{\hbox{\tiny alg}})^*:{\mathcal P}(Z/\hspace{-0.15cm}/^{\hbox{\tiny alg}}L)\ra {\mathcal P}(Z)^L$ is an isomorphism, where ${\mathcal P}(Z/\hspace{-0.15cm}/^{\hbox{\tiny alg}}L)$ is the algebra of regular functions on $Z/\hspace{-0.15cm}/^{\hbox{\tiny alg}}L$. Furthermore, algebraic analogues of properties (P1)--(P4) hold.

Comparing (P2) with its algebraic counterpart, we see that $Z/\hspace{-0.15cm}/L$ and $Z/\hspace{-0.15cm}/^{\hbox{\tiny alg}}L$ coincide as topological spaces and $\pi=\pi^{\hbox{\tiny alg}}$. Furthermore, $\pi^*: {\mathcal O}(Z/\hspace{-0.15cm}/^{\hbox{\tiny alg}}L)\ra{\mathcal O}(Z)^L$ is an isomorphism, hence by (P3) the complex spaces $Z/\hspace{-0.15cm}/L$ and $Z/\hspace{-0.15cm}/^{\hbox{\tiny alg}}L$ are biholomorphic. For more details on algebraic quotients see \cite{Kr}, \cite{MFK}, \cite{HH}.
\end{remark}

\section{Proof of Theorem \ref{main} for $G$ non-solvable}\label{case1}
\setcounter{equation}{0}

Without loss of generality one can assume that $G$ acts on $X$ effectively. We write the Levi-Malcev decomposition for $G$ as a locally semi-direct product $G=R\cdot S$, where $R$ is the radical and $S$ the semi-simple Levi subgroup of $G$. In this section we assume that $S$ is non-trivial. 

Let $A$ be a subgroup of $S$ isomorphic to $SL_2(\CC)$ (such a subgroup always exists). Since $A$ is a complex reductive group, the action of $A$ on $X$ is of algebraic type (see property (P1)). Let $T$ be a maximal torus in $A$. Since $T$ is a complex reductive subgroup, its action on $X$ is of algebraic type as well.

The result of this section is the following lemma.

\begin{lemma}\label{closedorbit} There exists a 1-dimensional closed $T$-orbit in $X$.
\end{lemma}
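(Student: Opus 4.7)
The plan is to establish the existence of a positive-dimensional closed $A$-orbit in $X$ (with $A \cong SL_2(\CC)$), and then locate a $1$-dimensional closed $T$-orbit inside it.

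\emph{Setup.} Since $A$ is reductive, by property (P4) every fiber of $\pi_A: X \to X/\hspace{-0.15cm}/A$ contains a closed $A$-orbit. Pick any closed $A$-orbit $Y \subset X$; being a closed submanifold of Stein $X$, $Y$ is Stein and $A$-homogeneous, so by Matsushima's criterion $Y = A/A_y$ with $A_y \subseteq A$ reductive. The proper reductive subgroups of $SL_2(\CC)$ are the finite subgroups, the maximal torus $T$, and its normalizer $N_A(T)$.

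\emph{Case 1: $\hbox{dim}\,Y \ge 1$.} Then $A_y \subsetneq A$, and I would show that for generic $g \in A$ the orbit $T\cdot [g] \subset Y$ is $1$-dimensional and closed. For the dimension, $T \cap gA_y g^{-1}$ is finite generically, since $gA_y g^{-1}$ is a proper subgroup of $A$ and cannot contain $T$ for generic $g$. Closedness in $Y$ reduces to $TgA_y$ being closed in $A$; a case check handles this ($TgA_y$ is a finite union of closed $T$-cosets when $A_y$ is finite; for $A_y = T$ the double coset $TgT$ with $g$ having all entries nonzero is cut out by the $T\times T$-invariant $(g')_{12}(g')_{21}$; the $N_A(T)$ case is analogous). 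Since $Y$ is closed in $X$, this yields the desired $1$-dim closed $T$-orbit in $X$.

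\emph{Case 2: $\hbox{dim}\,Y = 0$.} Here $Y = \{y_0\}$ is an $A$-fixed point, and I would reduce to Case 1 by producing a positive-dimensional closed $A$-orbit in $X$. Apply Heinzner's analytic slice theorem at $y_0$: an $A$-invariant Stein neighborhood $U$ of $y_0$ is $A$-biholomorphic to an $A$-saturated neighborhood $U' \subset V := T_{y_0}X$ of $0$ with the linearized action. By $G$-effectivity and Bochner linearization the $A$-action on $V$ is nontrivial (otherwise $A$ would act trivially near $y_0$, hence on $X$), so $V$ contains a nontrivial $A$-irreducible summand $V_n$ with $n \ge 1$. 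In $V_n$ the $T$-weights are $n, n-2, \ldots, -n$, and the point $v = e_n + e_{-n}$ corresponds to the stable binary form $X^n + Y^n$ with $n$ distinct roots; hence $A\cdot v$ is closed in $V$. Rescaling to $\epsilon v$ with $\epsilon$ small places the $A$-orbit $A\cdot(\epsilon v)$ inside $U'$ via $A$-saturation, giving a corresponding positive-dimensional $A$-orbit in $U \subset X$ that is closed in $U$. Because $A$-orbits project to points of the Hausdorff space $X/\hspace{-0.15cm}/A$, the closure of $A\cdot(\epsilon v)$ in $X$ lies in a single $\pi_A$-fiber; by $A$-saturation this fiber lies in $U$, so the orbit is closed in $X$, contradicting $\hbox{dim}\,Y = 0$.

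\emph{Main obstacle.} The critical and most subtle point is the closedness transfer in Case 2: the orbit $A\cdot(\epsilon v)$ is closed but unbounded in $V$, so it could a priori accumulate at points of $X \setminus U$ after the slice biholomorphism. The resolution above routes the closure through the Hausdorff categorical quotient and exploits $A$-saturation of $U$ to trap the closure inside $U$, where it coincides with the orbit itself. This same saturation argument simultaneously confirms that $T \cdot (\epsilon v)$, being closed inside the closed orbit $A \cdot (\epsilon v)$, would directly furnish the sought $1$-dim closed $T$-orbit (giving an alternative route that bypasses returning to Case 1).
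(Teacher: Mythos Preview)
Your route is genuinely different from the paper's. The paper starts from an arbitrary non-fixed point $q$, classifies the $2$-dimensional $A$-orbit $A/B$ using the Huckleberry--Livorni list of $SL_2$-homogeneous surfaces, and when $A.q$ is not closed analyzes its boundary via properties (P1)--(P4). You instead go straight to a \emph{closed} $A$-orbit (guaranteed by (P4)), invoke Matsushima's criterion to force reductive isotropy, list the proper reductive subgroups of $SL_2(\CC)$, and handle the fixed-point case with the Luna--Snow--Heinzner slice theorem. Your approach trades the surface classification for the slice theorem and some explicit double-coset computations in $SL_2$; the paper's approach stays closer to the tools already introduced in Section~2.

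There is, however, a real gap in your Case~2. Your assertion that $A\cdot v$ is closed in $V$ for $v=e_n+e_{-n}\in V_n$ fails when $n=1$: in the standard representation $V_1\cong\CC^2$ the $A$-orbit of any nonzero vector is $\CC^2\setminus\{0\}$, which is not closed (the ``binary form'' $X+Y$ has a single root and is certainly not stable). So if every nontrivial irreducible summand of $T_{y_0}X$ is a copy of $V_1$, your argument produces no positive-dimensional closed $A$-orbit. The fix is easy and is essentially contained in your final paragraph: bypass $A$-orbits and work directly with $T$. In $V_1$ the $T$-orbit of $e_1+e_{-1}$ is the closed curve $\{xy=1\}$; and since ${\mathcal O}(X)^A\subset{\mathcal O}(X)^T$, every $\pi_A$-saturated set is automatically $\pi_T$-saturated, so your saturation/closure-transfer argument applies verbatim to this $T$-orbit without any need for a closed $A$-orbit. (If instead $V$ contains at least two copies of $V_1$, a rank-two vector in $V_1\oplus V_1\cong\hbox{Mat}_{2\times 2}$ does have closed $A$-orbit, giving another patch.) Finally, tidy up the logic at the end of Case~2: exhibiting a \emph{different} positive-dimensional closed $A$-orbit does not ``contradict $\dim Y=0$'' for the particular $Y$ you chose; it just places you in Case~1.
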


For $S$ non-trivial, Theorem \ref{main} immediately follows from Lemma \ref{closedorbit}, and from Proposition \ref{mainprop} and Lemma \ref{suffpositive} applied to $Z:=X$, $Y:=X/\hspace{-0.15cm}/T$, $L:=T$, $\sigma:=\pi$, where $\pi$ is the categorical quotient map $X\ra X/\hspace{-0.15cm}/T$. In the case when $G$ is reductive (e.g., semi-simple), Theorem \ref{main} follows from the classical algebraic invariant theory by setting $Y:=X/\hspace{-0.15cm}/^{\hbox{\tiny alg}}T$ and $\sigma:=\pi^{\hbox{\tiny alg}}$, where $\pi^{\hbox{\tiny alg}}$ is the algebraic categorical quotient map $X\ra X/\hspace{-0.15cm}/^{\hbox{\tiny alg}}T$ (see Remark \ref{algebraic}). 

{\it Proof of Lemma \ref{closedorbit}.}\,\,\,----\,\,\, Choose a point $q\in X$ that is not fixed by $A$, and let $B$ be the isotropy subgroup of $q$ with respect to the $A$-action. Then either (a) $\hbox{dim}\, A/B=1$, or (b) $\hbox{dim}\, A/B=2$, or (c) $B$ is finite. In Case (a) the quotient $A/B$ is equivalent to $\PP^1$, which is impossible since $X$ is Stein. 

In Case (b) the quotient $A/B$ is equivalent to either the affine quadric $Q_{(2)}$, or  $\PP^2\setminus Q_1$ (note that $Q_{(2)}$ is a two-sheeted cover of $\PP^2\setminus Q_1$), or $\CC^2\setminus\{0\}/\ZZ_m$, $m\ge 1$ (see \cite{HL}). In this case most $T$-orbits in $A/B$ are closed. Hence if the orbit $A.q$ is closed in $X$, there exists a closed 1-dimensional $T$-orbit in $X$. If $A.q$ is not closed, then we consider the analytic Zariski closed set $S_1=\overline{A.q}\setminus A.q$ in $X$. The set $S_1$ is $A$-invariant and by properties (P2), (P4) contains a single $A$-orbit of minimal dimension. If $S_1$ is 1-dimensional, it follows that it contains a 1-dimensional $A$-orbit, and we obtain a contradiction as in Case (a). Otherwise, $S_1$ is a single point. In this case $\overline{A.q}$ is a disjoint union of this point and a copy of $\CC^2\setminus\{0\}/\ZZ_m$, hence the generic $T$-orbit in $A.q$ is closed in $X$. 

Finally, in Case (c) all $T$-orbits in $A/B$ are closed. Hence if the orbit $A.q$ is closed in $X$, there exists a closed 1-dimensional $T$-orbit in $X$. If $A.q$ is not closed, then we consider the analytic Zariski closed set $S_2=\overline{A.q}\setminus A.q$ in $X$. This set contains a 2-dimensional $A$-orbit, and replacing $q$ by a point from this orbit, we apply the argument used in Case (b). 

Thus, we have shown that there exists a 1-dimensional closed $T$-orbit in $X$.            
\qed

\section{Proof of Theorem \ref{main} for $G$ solvable}\label{case2}
\setcounter{equation}{0}

In this section we assume that $G=R$ is solvable. By allowing $G$ to act on $X$ almost effectively, we further assume that $G$ is simply-connected. The following is the key result for the proof of Theorem \ref{main} in this case.

\begin{proposition}\label{solv} Let $X$ with $\hbox{dim}\,X\ge 2$ be a connected Stein manifold. Assume that $X$ is a homogeneous space $G/H$ of a simply-connected solvable complex Lie group $G$ acting almost effectively on $X$. Then there exists a closed subgroup $I\subset G$ containing $H$ such that $\hbox{dim}\,I>\hbox{dim}\, H$ and $G/I$ is positive-dimensional Stein manifold. Furthermore, either $G/I$ is biholomorphic to $\CC^m$ for $m\ge 1$, or $G'\subset I$.  
\end{proposition}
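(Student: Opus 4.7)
The plan is to argue by cases on the position of $H$ relative to the commutator subgroup $G'$, exploiting that for a simply-connected solvable complex Lie group $G$ the derived subgroup $G'$ is closed, connected and simply-connected, and that the abelianization projection $\pi:G\to G/G'$ produces an identification $G/G'\cong\CC^k$ for some $k\ge 0$. The first case is $G'\subseteq H$: then $X=G/H$ is itself an abelian complex Lie group, namely $\CC^k/\pi(H)$, and being a Stein abelian Lie group of dimension $\ge 2$, it is biholomorphic to $\CC^a\times(\CC^*)^b$ with $a+b\ge 2$. I would choose a closed complex one-parameter subgroup $J\subset X$ (one of the $\CC$- or $\CC^*$-factors) such that $X/J$ is still positive-dimensional, and take $I$ to be the preimage of $J$ in $G$. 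Then $H\subset I$, $\dim I=\dim H+1$, and since $G'\subseteq H\subseteq I$ the alternative $G'\subseteq I$ is automatic, while $G/I\cong X/J$ is positive-dimensional Stein.

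Now suppose $G'\not\subseteq H$, and let $V\subseteq\CC^k$ be the complex linear span of $\pi(H)$. If $V\ne\CC^k$, set $I:=\pi^{-1}(V)$; this is a closed complex Lie subgroup containing both $H$ and $G'$, the quotient $G/I\cong\CC^{k-\dim V}$ is of the form $\CC^m$ with $m\ge 1$, and a short dimension count yields $\dim I-\dim H\ge \dim\mathfrak{g}'-\dim(\mathfrak{h}\cap\mathfrak{g}')>0$, strict positivity being supplied by $G'\not\subseteq H$. The remaining possibility is $V=\CC^k$; since on the identity component $\pi(H^0)=\pi(\mathfrak{h})$ is already a complex subspace, this forces (after absorbing the discrete part of $H$) $HG'=G$. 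Then $X=G/H$ is realised as the homogeneous space $G'/(H\cap G')$ of the strictly lower-dimensional simply-connected solvable complex Lie group $G'$, which continues to act almost effectively on $X$. I would apply the proposition by induction on $\dim G$ to the pair $(G',H\cap G')$ to produce $I'\subseteq G'$ with the required properties, and then define $I:=H\cdot I'$. Using $HG'=G$ together with $H\cap G'\subseteq I'$ one verifies that $I$ is a closed complex Lie subgroup, that $G\cap HI'=I'$ so $G/I\cong G'/I'$ is positive-dimensional Stein, and that $\dim I=\dim H+\dim I'-\dim(H\cap G')>\dim H$.

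The main obstacle is propagating the ``alternative'' through the inductive step. When the inductive call returns $G'/I'\cong\CC^m$ we are immediately done, since $G/I\cong G'/I'$; the genuinely subtle situation is when it returns only $(G')'\subseteq I'$ with $G'/I'$ not $\CC^m$. In that case $G/I$ is a Stein homogeneous space of the abelian group $G'/(G')'$, hence of the form $\CC^a\times(\CC^*)^b$, and since $G'\not\subseteq I$ (because $G'\cap HI'=I'\subsetneq G'$ under $HG'=G$), I must exclude it being a non-trivial toric quotient. The remedy is to enlarge $I'$ inside $G'$ once more, pulling back the maximal $(\CC^*)^b$-factor of the abelian quotient to reduce $G/I$ to a pure $\CC^a$; the leftover corner $a=0$ is ruled out by observing that $G'/(H\cap G')\cong(\CC^*)^b$ together with $G$ simply-connected and the $H$-invariance of the lattice $H\cap G'$ force the adjoint action of $H$ on $\mathfrak{g}'$ to preserve a full rank lattice, which in the solvable complex setting collapses the action and contradicts $G'\not\subseteq H$. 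A secondary technicality, namely the possible disconnectedness of $H$ (so that $\pi(H)$ may strictly contain $\pi(\mathfrak{h})$), is addressed throughout by working with the complex span of $\pi(H)$ rather than of $\pi(\mathfrak{h})$, which keeps the Case~2 construction insensitive to whether $H=H^0$.
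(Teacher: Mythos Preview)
Your first two cases are reasonable, but Case~3 contains a genuine gap that breaks the argument. You claim that if the complex span $V$ of $\pi(H)$ equals $\CC^k$ then ``(after absorbing the discrete part of $H$)'' one has $HG'=G$. This inference is false whenever $H$ is disconnected. Consider $G=\CC^2\rtimes\CC$, where $t\in\CC$ acts on $\CC^2$ by $\mathrm{diag}(e^t,e^{-t})$; then $G$ is simply-connected solvable, $G'=\CC^2$, and $G/G'\cong\CC$, so $k=1$. Take $H=\{(0,0,n):n\in\ZZ\}$. One checks that $G/H\cong\CC^2\times\CC^*$ is Stein of dimension $3$, that $G$ acts effectively, and that $G'\not\subset H$. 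Here $\pi(H)=\ZZ$, whose complex span is all of $\CC$, so $V=\CC^k$; yet $HG'=\CC^2\times\ZZ\ne G$, and $G'/(H\cap G')=\CC^2$ is two-dimensional while $X$ is three-dimensional, so your reduction to $(G',H\cap G')$ is unavailable. The phrase ``absorbing the discrete part of $H$'' has no content here: there is no intermediate complex subgroup that turns $\pi(H)=\ZZ$ into $\CC$ without already solving the problem.

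Even in the genuine case $HG'=G$ your inductive step has further problems. You set $I:=H\cdot I'$, but there is no reason for $H$ to normalize the subgroup $I'\subset G'$ produced by induction, so $HI'$ need not be a subgroup at all, let alone closed. The closing argument about excluding the toric case by forcing the adjoint action of $H$ on $\mathfrak g'$ to preserve a lattice is also not a proof: a discrete subgroup $H\cap G'$ of $G'$ need not be a full-rank lattice, and even if it were, preserving it does not by itself collapse a complex solvable action.

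The paper's proof proceeds along entirely different lines and is substantially more delicate precisely because of the discrete phenomena you are trying to wave away. It first passes to the Tits fibration $G/H\to G/J$ with $J=N_G(H^\circ)$, realizes $G/J$ inside a projective orbit via a representation $\rho$, and uses Chevalley's theorem that $\rho(G')$ is algebraic to obtain closed intermediate subgroups. When $J=G$ (so $H=\Gamma$ is discrete) the argument bifurcates: for $G$ nilpotent one invokes Mal'cev's real hull $\Gamma_{\RR}$ and its complexification, together with a chain of central quotients, to locate $I$; for $G$ non-nilpotent one passes to the algebraic hull $\hat G\cong(\CC^*)^\beta\ltimes G$ and uses Zariski closures of $\Gamma$ inside $\hat G$ to produce the required closed subgroup. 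These algebraicity and hull constructions are exactly what guarantee closedness of groups like $G'\hat\Gamma$ and the Steinness of the resulting quotients---the two properties your approach cannot secure.
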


We will now show how Theorem \ref{main} for $G$ solvable follows from Proposition \ref{solv}. We proceed by induction on $n:=\hbox{dim}\,X$. Let $n=2$ and $I$ be any subgroup as in Proposition \ref{solv}. If $G/I$ is biholomorphic to $\CC$, then by Grauert's Oka Principle \cite{G} the fiber bundle $\Pi: G/H\ra G/I$ is holomorphically trivial, since it is topologically trivial. Hence the fiber $I/H$ of this bundle is connected (note also that $I$ is connected since $G/I$ is simply-connected). Since $I/H$ is 1-dimensional and the complex Lie group $I$ acts holomorphically and transitively on $I/H$, it follows that $I/H$ is biholomorphic to either $\CC$ or $\CC^*$. Hence the manifold $X$ is biholomorphic to either $\CC^2$ or $\CC\times\CC^*$, and therefore $V_{\mathcal O}^c(X)$ is infinite-dimensional. 

Assume now that $G'\subset I$, hence $I$ is a normal subgroup of $G$. The infinite-dimensionality of $V_{\mathcal O}^c(X)$ now follows from Proposition \ref{mainprop} applied to $Z:=X$, $Y:=G/I$, $L:=I$, $\sigma:=\Pi$, where $\Pi: G/H\ra G/I$ is the factorization map.

We will now deal with the induction step for $n\ge 3$. If $G/I$ is biholomorphic to $\CC^m$, then using Grauert's Oka Principle we see as above that $X$ is biholomorphic to $I/H\times\CC^m$ (here $I/H$ and $I$ are connected). If $m\ge 2$, then $V_{\mathcal O}^c(X)$ is infinite-dimensional since $V_{\mathcal O}^c(\CC^m)$ is infinite-dimensional. If $m=1$, then $\hbox{dim}\,I/H=n-1\ge 2$, and we apply the induction hypothesis to the Stein manifold $I/H$ (note that $I/H$ is a homogeneous space of a simply-connected solvable complex Lie group acting almost effectively). Then $V_{\mathcal O}^c(X)$ is infinite-dimensional since $V_{\mathcal O}^c(I/H)$ is infinite-dimensional. If $G'\subset I$, the infinite-dimensionality of $V_{\mathcal O}^c(X)$ follows from Proposition \ref{mainprop} as in the preceding paragraph.

In the remainder of this section we prove Proposition \ref{solv}.

{\it Proof of Proposition \ref{solv}.}\,\,\,----\,\,\, Let $J:=N_G(H^{\circ})$ be the normalizer of the connected identity component $H^{\circ}$ of $H$ in $G$. The subgroup $J$ is closed in $G$ and clearly contains $H$. The corresponding fibration $G/H\ra G/J$ is often called the Tits fibration (see \cite{BR}, \cite{Ti}). This fibration coincides with the ${\mathfrak g}$-anticanonical fibration, where ${\mathfrak g}$ is the Lie algebra of $G$ (see \cite{HO1}, p.p. 61, 65). In particular, there is a representation $\rho$ of $G$ (not necessarily faithful) by transformations of a complex projective space $\PP^r$ such that $J=\rho^{-1}({\mathcal J})$, where ${\mathcal J}$ consists of all elements of $\rho(G)$ that fix some point $x_0$ in $\PP^r$. By a result due to Chevalley \cite{C} (see also \cite{HO1}, p. 31), the commutator subgroup $\rho(G)'=\rho(G')$ is an algebraic subgroup of $\hbox{Aut}_{\mathcal O}(\PP^r)$. Hence the orbit $\rho(G)'.x_0$ is closed in $\rho(G).x_0$, and thus the subgroup $P:=\rho^{-1}(\rho(G)')J$ is closed in $G$. We will consider three main cases. 

{\bf Case 1.} Assume that $J\ne G$ and $P=G$. In this case $\rho(G).x_0=\rho(G)'.x_0$. Let $U$ be the isotropy subgroup of $x_0$ under the $\rho(G)'$-action. Since $\rho(G)'$ acts on $\PP^r$ algebraically, $U$ has finitely many connected components. The group $\rho(G)'$ consists of unipotent matrices, hence it is simply-connected and nilpotent, and therefore its exponential map is a biholomorphism onto $\rho(G)'$. It then follows that $U$ is in fact connected (observe that a subgroup with finitely many connected components in a Lie group for which the exponential map is a diffeomorphism, is connected). Hence $\rho(G)'/U$ is biholomorphic to some $\CC^{\ell}$ (see, e.g., \cite{OR}). Thus $G/J$ is biholomorphic to $\CC^{\ell}$ as well. Since $J\ne G$, we have $\ell\ge 1$. If $\hbox{dim}\,J>\hbox{dim}\,H$, then taking $I:=J$, we have the result of the proposition.

Let $\hbox{dim}\,J=\hbox{dim}\,H$. From the simple connectivity of $G/J$ we then see that $J=H=H^{\circ}$. Let $Q:=\rho^{-1}(\rho(G)')$ and denote by ${\mathfrak q}$ the Lie algebra of $Q$. We now consider the adjoint representation $\hbox{Ad}_G$ of $G$ on ${\mathfrak q}$ (note that ${\mathfrak q}$ is an ideal in ${\mathfrak g}$ and hence is $\hbox{Ad}_G$-invariant). By a theorem due to Lie (see, e.g., \cite{OV}, p. 8), there exists a full $\hbox{Ad}_G$-invariant flag $\{0\}={\mathfrak q}_0\subset{\mathfrak q}_1\subset\dots\subset{\mathfrak g}_M={\mathfrak q}$ in ${\mathfrak q}$ (here $M=\hbox{dim}_{\CC}{\mathfrak q}$). Since each subspace in the flag is $\hbox{ad}_G$-invariant, it is in fact an ideal in ${\mathfrak g}$. 

Let $G_k$ be the normal connected subgroup of $G$ with Lie algebra ${\mathfrak q}_k$, $k=0,\dots,M$  ($G_k$ is automatically closed since any connected subgroup of a simply-connected solvable group is closed -- see, e.g., \cite{Vara}, p. 243). For every $k$, the image $\rho(G_k)$ is a connected subgroup of the simply-connected nilpotent group $\rho(G)'$ and hence is an algebraic subgroup in $\rho(G)'$ (note that every simply-connected nilpotent group is algebraic and every connected subgroup in it is an algebraic subgroup -- see, e.g., \cite{GSV}, p. 43). It follows as before that the orbit $\rho(G_k).x_0$ is closed in $\rho(G).x_0$, and thus the subgroup $P_k:=\rho^{-1}(\rho(G_k))J$ is closed in $G$ for every $k$. Let $1\le k_0\le M-1$ be such that $\hbox{dim}\,J<\hbox{dim}\,P_{k_0}<\hbox{dim}\,P=\hbox{dim}\,G$. Since $J=H$ is connected, it lies in $P_{k_0}^{\circ}$. The quotient $G/P_{k_0}^{\circ}$ is biholomorphic to some $\CC^k$ (see, e.g., \cite{OR}), and taking $I:=P_{k_0}^{\circ}$ we obtain the result of the proposition.

{\bf Case 2.} Assume that $P\ne G$. Let $\hat G$ be the Zariski closure of $\rho(G)$ in $\hbox{Aut}_{\mathcal O}(\PP^r)$ (note that $\hat G$ is automatically a subgroup of $\hbox{Aut}_{\mathcal O}(\PP^r)$). Consider the orbit $\hat G.x_0$ and let $\hat J$ be the isotropy subgroup of $x_0$ under the $\hat G$-action. Since $\hat G'=\rho(G)'$, we see as before that the orbit $\rho(G)'.x_0$ is closed in $\hat G.x_0$. Therefore, the normal subgroup $\rho(G)'\hat J$ is closed in $\hat G$. The quotient $\hat G/\rho(G)'\hat J$ is an affine algebraic Abelian group, and, in particular, is Stein. The complex Abelian group $G/P$ can be realized in $\hat G/\rho(G)'\hat J$ as the $\rho(G)$-orbit of the neutral element $\rho(G)'\hat J$ and therefore is holomorphically separable. It now follows from \cite{M} that $G/P$ is isomorphic to a product $\CC^p\times(\CC^*)^q$ and thus is Stein. 

Since $P\ne G$, the quotient $G/P$ is positive-dimensional. We will now show that $\hbox{dim}\,P>\hbox{dim}\,H$. If this is not the case, then $J^{\circ}=H^{\circ}$ and $(\rho^{-1}(\rho(G)'))^{\circ}\subset H^{\circ}$. In particular, the connected normal subgroup $G'$ lies in $H$. Therefore, $G'$ acts on $X$ trivially. Since the action of $G$ on $X$ was assumed to be almost effective, it follows that $G'$ is trivial, i.e., $G$ is Abelian. But in this case $J=G$ which contradicts the assumption $P\ne G$. Thus we have $\hbox{dim}\,P>\hbox{dim}\,H$, and taking $I:=P$, we obtain the result of the proposition.

{\bf Case 3.} Assume that $J=G$. In this case $H^{\circ}$ is normal in $G$. Since $G$ acts on $X$ almost effectively, the subgroup $H$ is discrete. To emphasize the discreteness of $H$, we redenote it by $\Gamma$. We now consider several possibilities.

{\bf 3.1} Suppose that $G$ is nilpotent. By the results of \cite{Mal}, \cite{Mat1} there exists a closed connected real subgroup $\Gamma_{\RR}$ of $G$ (called the real hull of $\Gamma$) that contains $\Gamma$ and such that $X_{\RR}:=\Gamma_{\RR}/\Gamma$ is compact. The Lie algebra $\gamma_{\RR}$ of $\Gamma_{\RR}$ is the smallest real subalgebra of ${\mathfrak g}$ containing the subset $\log(\Gamma)\subset{\mathfrak g}$. Let $\gamma_{\CC}$ be the smallest complex subalgebra of ${\mathfrak g}$ containing $\gamma_{\RR}$. Denote by $\Gamma_{\CC}$ the connected closed complex Lie subgroup of $G$ with Lie algebra $\gamma_{\CC}$. The quotient $G/\Gamma_{\CC}$ is biholomorphic to some $\CC^s$. 

{\bf 3.1.1.} Let $\Gamma_{\CC}\ne G$.  If $\Gamma_{\CC}$ is non-trivial, then taking $I:=\Gamma_{\CC}$, we have the result of the proposition. If $\Gamma_{\CC}$ is trivial, i.e., $\Gamma$ is trivial, let $Z$ be the center of $G$. Since $G$ is nilpotent, $Z$ is a positive-dimensional closed connected subgroup of $G$ (the center of any solvable exponential Lie group is connected \cite{Wu}). If $G$ is not Abelian, then $Z\ne G$, and $G/Z$ is biholomorphic to some $\CC^\tau$, $\tau\ge 1$. Then, taking $I:=Z$, we have the result of the proposition. If $G$ is Abelian, it is isomorphic to $\CC^{s}$ with $s\ge 2$. In this case we take $I:=\CC$ and obtain the result of the proposition.     

{\bf 3.1.2.} Let $\Gamma_{\CC}=G$. We consider two situations. 

{\bf 3.1.2.a.} Assume that $G$ is not Abelian. As before, let $Z$ be the center of $G$. The subgroup $Z\Gamma$ is closed in $G$ (see \cite{GH}), and we consider the fibration $G/\Gamma\ra G/Z\Gamma$. We represent $G/Z\Gamma$ as $G^1/\Gamma^1$, where $G^1:=G/Z$, $\Gamma^1:=\Gamma/\Gamma\cap Z$. Here $G^1$ is a simply-connected complex nilpotent subgroup and $\Gamma^1$ is a discrete subgroup such that $\Gamma^1_{\CC}=G^1$. Applying the same procedure to $G^1$, $\Gamma^1$ and the center $Z^1$ of $G^1$, we obtain the fibration $G^1/\Gamma^1\ra G^1/Z^1\Gamma^1=G^2/\Gamma^2$. Repeating this process finitely many times, we obtain for some $N\ge 1$ a simply-connected complex nilpotent group $G^N$ and a discrete subgroup $\Gamma^N$ with positive-dimensional quotient $G^N/\Gamma^N$, such that $G^N/Z^N\Gamma^N$ is trivial, where $Z^N$ is the center of $G^N$. Hence $G^N$ is Abelian, and therefore $(G^{N-1})'\subset Z^{N-1}$ (we set $G^0:=G$, $Z^0:=Z$, $\Gamma^0:=\Gamma$). Taking the inverse image of $Z^{N-1}$ under the chain of factorization maps $G\ra G/Z=G^1\ra G^1/Z^1=G^2\ra\dots\ra G^{N-2}/Z^{N-2}=G^{N-1}$, we obtain a subgroup $V\subset G$ containing $G'$. Clearly, $V\Gamma$ is the inverse image of $Z^{N-1}\Gamma^{N-1}$ and hence is closed in $G$. Furthermore, $G/V\Gamma=G^{N-1}/Z^{N-1}\Gamma^{N-1}=G^N/\Gamma^N$.

We note that $\hbox{dim}\,V\Gamma>0$ since otherwise $V^{\circ}$ is trivial, and therefore the connected normal subgroup $G'$ is trivial as well, which contradicts our assumption. 

\begin{lemma}\label{lemmasteinness} The manifold $G^N/\Gamma^N$ is Stein.
\end{lemma}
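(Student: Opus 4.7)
The plan is to reduce the Stein-ness of $G^N/\Gamma^N$ to a structural statement about discrete-group quotients of $\CC^d$, and then to exclude the non-Stein possibility by appealing to the Stein-ness of $G/\Gamma$.

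First I would verify by induction on $k$ that the property $\Gamma^k_\CC = G^k$ is maintained throughout the iteration, so that in particular $\Gamma^N_\CC = G^N$. The base case $k=0$ is the hypothesis of subcase 3.1.2. For the induction step, the quotient projection $G^k \ra G^{k+1} = G^k/Z^k$ sends $\Gamma^k_\CC = G^k$ onto $G^{k+1}$; the preimage in $G^k$ of the complex hull $\Gamma^{k+1}_\CC$ is a complex subgroup of $G^k$ containing both $\Gamma^k$ and $Z^k$, hence contains $\Gamma^k_\CC\cdot Z^k = G^k$, forcing equality.

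Since $G^N$ is abelian and simply-connected, $G^N$ is biholomorphic to $\CC^d$ for some $d\ge 1$, and $\Gamma^N$ is a discrete subgroup whose $\CC$-linear span is $G^N$. By the Remmert--Morimoto decomposition of connected abelian complex Lie groups, $\CC^d/\Gamma^N$ is either biholomorphic to $(\CC^*)^d$ (which happens exactly when $\Gamma^N$ is totally real of real rank $d$), or it contains a positive-dimensional compact complex subtorus arising from a nontrivial toroidal (Cousin) factor. In the first case $G^N/\Gamma^N$ is Stein, as desired.

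To rule out the second case I would use the Stein-ness of $X = G/\Gamma$. A nontrivial toroidal factor in $G^N/\Gamma^N$ would mean that the Malcev real hull $(\Gamma^N)_\RR$ contains a positive-dimensional complex subspace $W\subset G^N$, which descends to a compact complex subtorus of $G^N/\Gamma^N$. Pulling $W$ back to $G$ through the tower of projections $G\ra G^1\ra\dots\ra G^N$ and combining with the Malcev real hull $\Gamma_\RR\subset G$ (whose quotient $\Gamma_\RR/\Gamma\subset G/\Gamma$ is compact and projects onto $(\Gamma^N)_\RR/\Gamma^N$) should yield a positive-dimensional compact complex analytic subset of $G/\Gamma$, contradicting Stein-ness.

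The principal obstacle is this final pullback step: the fibers of $\pi:G/\Gamma\ra G^N/\Gamma^N$ are not compact in general, so the naive preimage of a compact subtorus is not compact. The key will be to use the compact real torus $\Gamma_\RR/\Gamma$ as a compact cross-section and to exploit that it maps onto $(\Gamma^N)_\RR/\Gamma^N$; the preimage of $W$ should then lie in a complex subgroup of $G$ meeting $\Gamma_\RR$ in sufficient measure to yield the required compact analytic subvariety. Tracking carefully how the real and complex hulls of $\Gamma$ transform under the iterated quotients by centers, using the nilpotency of $G$ and the fact that $\exp$ is a diffeomorphism onto $G$, will be the main technical work.
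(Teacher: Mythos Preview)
Your overall strategy is sound---the dichotomy for $\CC^d/\Gamma^N$ is exactly right, and the obstruction to Stein-ness is precisely a toroidal factor---but the pullback step you flag as the ``principal obstacle'' is a genuine gap, and the paper circumvents it rather than filling it.

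The paper works in the \emph{forward} direction. From Theorem~7 of \cite{GH} and the holomorphic separability of $X=G/\Gamma$, one has that $X_{\RR}=\Gamma_{\RR}/\Gamma$ is totally real in $X$ with $\dim_{\RR} X_{\RR}=\dim_{\CC} X$. One then checks that this pair of properties (totally real, half-dimensional) is inherited by $X^1_{\RR}\subset X^1$, then by $X^2_{\RR}\subset X^2$, and so on down to $X^N_{\RR}\subset X^N$. At the bottom, $X^N$ is an abelian complex Lie group containing the compact half-dimensional totally real subgroup $X^N_{\RR}\cong (S^1)^{\nu}$, which forces $X^N\cong(\CC^*)^{\nu}$.

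So instead of pulling a hypothetical compact complex torus \emph{back} through the tower (where, as you note, the fibers are non-compact and control of the complex structure is delicate), the paper pushes the totally real condition \emph{forward} through the tower. Pushing forward is easier: the image of a totally real subalgebra under a complex-linear quotient map is again totally real, and the real hull of $\Gamma^{k+1}$ is contained in the projection of $\Gamma^k_{\RR}$, so the dimension count and total reality persist. Your approach is essentially the contrapositive of this, and while it could likely be completed, it duplicates work already packaged in \cite{GH} and requires the awkward compactness argument you anticipate. The forward direction avoids that entirely.
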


\begin{proof} Arguing as in the proof of Theorem 7 of \cite{GH}, using the holomorphic separability of $X$, we see that $X_{\RR}$ is totally real in $X$ and $\hbox{dim}\,X_{\RR}=\hbox{dim}_{\CC}X$. This implies that $X^1_{\RR}:=\Gamma^1_{\RR}/\Gamma^1$ is totally real in $X^1:=G^1/\Gamma^1$, where $\Gamma^1_{\RR}$ is the real hull of $\Gamma^1$, and $\hbox{dim}\,X^1_{\RR}=\hbox{dim}_{\CC}X^1$. 

Descending along the chain of fibrations constructed above, we obtain that $X^N_{\RR}:=\Gamma^N_{\RR}/\Gamma^N$ is totally real in $X^N:=G^N/\Gamma^N$, where $\Gamma^N_{\RR}$ is the real hull of $\Gamma^N$, and $\hbox{dim}\,X^N_{\RR}=\hbox{dim}_{\CC}X^N$. Since $X^N$ is complex Abelian group and $X^N_{\RR}\subset X^N$ is compact, $X^N_{\RR}$ is a (real) Abelian group isomorphic to some torus $(S^1)^{\nu}$. Therefore, $X^N$ is isomorphic to $(\CC^*)^{\nu}$ and hence is Stein.
\end{proof} 

Taking $I:=V\Gamma$, we obtain the result of the proposition.

{\bf 3.1.2.b.} Assume that $G$ is Abelian. Since $X_{\RR}\subset X$ is compact, $X_{\RR}$ is a real Abelian group isomorphic to some torus $(S^1)^{\kappa}$, hence the complex Abelian group $X$ is isomorphic to $(\CC^*)^{\kappa}$ with $\kappa\ge 2$. Let $L\subset X$ be a subgroup isomorphic to $\CC^*$. Taking $I$ to be the inverse image of $L$ under the factorization map $G\ra G/\Gamma=X$, we have the result of the proposition.

{\bf 3.2.} Suppose that $G$ is not nilpotent. 

{\bf 3.2.1.} Let $\Gamma$ be trivial. Note that $G'\ne G$ (otherwise $G$ is not solvable) and that $G'$ is positive-dimensional (otherwise $G$ is Abelian and hence nilpotent). Since $G'$ is connected, $G/G'$ is isomorphic to some $\CC^{\alpha}$. Taking $I:=G'$, we obtain the result of the proposition.

{\bf 3.2.2.} Let $\Gamma$ be non-trivial. By \cite{HO2} there is a nilpotent subgroup $\Gamma_1\subset \Gamma$ of finite index. Let $\hat G$ be the algebraic hull of $G$ as constructed in \cite{HM}. Namely, $\hat G$ is a solvable complex algebraic group isomorphic as a Lie group to $(\CC^*)^{\beta}\ltimes G$ for some $\beta$. It contains $G$ as a closed subgroup in the real topology and as a dense subgroup in the Zariski topology. Let $\hat \Gamma_1$ be the Zariski closure of $\Gamma_1$ in $\hat G$. Clearly, $\hat \Gamma_1$ is a nilpotent algebraic subgroup of $\hat G$. Since $\Gamma_1$ is of finite index in $\Gamma$, it follows that $\Gamma\hat \Gamma_1$ is Zariski closed in $\hat G$. Hence the Zariski closure $\hat \Gamma$ of $\Gamma$ in $\hat G$ coincides with $\Gamma\hat \Gamma_1$ (it follows that $\Gamma\hat \Gamma_1$ is a subgroup of $\hat G$). Since $\hat \Gamma_1$ is of finite index in $\hat \Gamma$, we have $(\hat \Gamma)^{\circ}=(\hat \Gamma_1)^{\circ}$. In particular, $\hat \Gamma^{\circ}$ is nilpotent. 

Since $\hat G$ is an (affine) algebraic group, it is isomorphic to a linear algebraic group. In this representation the subgroup $G'=(\hat G)'$ consists of unipotent matrices, and hence is simply-connected and nilpotent. Moreover, by \cite{C} it is an algebraic subgroup in $\hat G$. In particular, the $G'$-orbit of the neutral element $\hat \Gamma$ in the algebraic variety $\hat G/\hat \Gamma$ is closed and, it follows that $G'\hat \Gamma$ is a closed subgroup of $\hat G$. Hence $G\cap G'\hat \Gamma$ is a closed subgroup of $G$.

{\bf 3.2.2.a.} Assume that $G\not\subset G'\hat \Gamma$, i.e., $G/(G\cap G'\hat \Gamma)$ is positive-dimensional. Observe that $G\cap G'\hat \Gamma$ is positive-dimensional as well (otherwise $G'$ would be trivial hence $G$ would be nilpotent). To see that the complex Abelian group $G/(G\cap G'\hat \Gamma)$ is Stein, we realize it in the Stein complex Abelian group $\hat G/G'\hat \Gamma$ as the $G$-orbit of the neutral element $G'\hat \Gamma$. Thus $G/(G\cap G'\hat \Gamma)$ is holomorphically separable, hence it is Stein (see \cite{M}). Therefore, taking $I:=G\cap G'\hat \Gamma$, we obtain the result of the proposition.

{\bf 3.2.2.b.} Assume that $G\subset G'\hat \Gamma$. In this case we consider the quotient $G/(G\cap \hat \Gamma)$. The subgroup $G'$ acts transitively on this quotient. If $G\subset \hat \Gamma$, then $G\subset (\hat \Gamma)^{\circ}$, which is impossible since $G$ is not nilpotent. Therefore $G/(G\cap \hat \Gamma)$ is positive-dimensional. Observe that $G/(G\cap \hat \Gamma)=G'/(G'\cap\hat \Gamma)$ can be realized in $\hat G/\hat \Gamma$ as the $G'$-orbit of the neutral element $\hat \Gamma$. The action of $G'$ on $\hat G/\hat \Gamma$ is algebraic, and therefore the subgroup $G'\cap\hat \Gamma$, being the isotropy subgroup of the neutral element $\hat \Gamma$, has finitely many connected components. Since $G'$ is simply-connected and nilpotent, it follows that $G'\cap\hat \Gamma$ is in fact connected. Therefore, $G/(G\cap \hat \Gamma)$ is biholomorphic to some $\CC^{\gamma}$. If $G\cap \hat \Gamma$ is zero-dimensional, then $\Gamma$ is trivial (note that $G/(G\cap \hat \Gamma)$ is simply-connected), which contradicts our assumption. Thus, taking $I:=G\cap \hat \Gamma$, we obtain the result of the proposition.

The proof of Proposition \ref{solv} is complete.\qed

\begin {thebibliography} {ABCD}

\bibitem[B-B]{B-B} Bia\l ynicki-Birula, A., On homogeneous affine spaces of linear algebraic groups, {\it Amer. J. Math.} 85(1963), 577--582.

\bibitem[BH-C]{BH-C} Borel, A. and Harish-Chandra, Arithmetic subgroups of algebraic groups, {\it Ann. of Math.} (2) 75(1962), 485--535.

\bibitem[BR]{BR} Borel, A. and Remmert, R., \"Uber kompakte homogene K\"ahlersche Mannigfaltigkeiten, {\it Math. Ann.} 145(1962), 429--439.

\bibitem[C]{C} Chevalley, C., {\it Th\'eorie des groupes de Lie II: Groupes alg\'ebriques}, Paris, Hermann, 1951.

\bibitem[DZ]{DZ} Dunne, E. G. and Zierau, R., The automorphism groups of complex homogeneous spaces, {\it Math. Ann.} 307 (1997), 489--503.

\bibitem[GH]{GH} Gilligan, B. and Huckleberry, A., On non-compact complex nil-manifolds, {\it Math. Ann.} 238 (1978), 39--49.

\bibitem[GSV]{GSV} Gorbatsevich, V. V., Shvartsman, O. V. and Vinberg, E. B., Discrete subgroups of Lie groups (translated from Russian), 1--123, 217--223, Encyclopaedia of Mathematical Sciences, 21, Lie groups and Lie algebras II, Springer, Berlin, 2000.

\bibitem[G]{G} Grauert, H., Analytische Faserungen \"uber holomorph-vollst\"andigen R\"aumen, {\it Math. Ann.} 135(1958), 263--273.

\bibitem[HH]{HH} Heinzner, P. and Huckleberry, A., {\it Complex Geometry of Hamiltonian Actions}, a monograph in progress.

\bibitem[HM]{HM} Hochschild, G. and Mostow, G. D., On the algebra of representative functions of an analytic group, II, {\it Amer. J. Math.} 86(1964), 869--887.

\bibitem[HL]{HL} Huckleberry, A. T. and Livorni, E. L., A classification of homogeneous surfaces, {\it Canad. J. Math.} 33(1981), 1097--1110.

\bibitem[HO1]{HO1} Huckleberry, A. and Oeljeklaus, E., {\it Classification Theorems for Almost Homogeneous Spaces}, Revue de l'Institut Elie Cartan 9, Nancy, 1984.

\bibitem[HO2]{HO2} Huckleberry, A. T. and Oeljeklaus, E., On holomorphically separable complex solv-manifolds, {\it Ann. Inst. Fourier (Grenoble)} 36(1986), 57--65.

\bibitem[KK1]{KK1} Kaliman, S. and Kutzschebauch, F., Density property for hypersurfaces $UV=P(\overline X)$, {\it Math. Z.} 258 (2008), 115--131.

\bibitem[KK2]{KK2} Kaliman, S. and Kutzschebauch, F., Criteria for the density property of complex manifolds, {\it Invent. Math.} 172(2008), 71--87.

\bibitem[Ka]{Ka} Kaup, W., Reelle Transformationsgruppen und invariante Metriken auf komplexen R\"aumen, {\it Invent. Math.} 3(1967), 43--70.

\bibitem[Ko]{Ko} Kobayashi, S., {\it Hyperbolic Manifolds and Holomorphic Mappings}, Marcel Dekker, New York, 1970.

\bibitem[Kr]{Kr} Kraft, H., {\it Geometrische Methoden in der Invariantentheorie}, Aspects of Mathematics, D1, Vieweg, Braunschweig, 1984.

\bibitem[Mal]{Mal} Malcev, A., On a class of homogeneous spaces, {\it Amer. Math. Soc. Translations} 39, 1951.

\bibitem [Mat1]{Mat1} Matsushima, Y., On the discrete subgroups and homogeneous spaces of nilpotent Lie groups, {\it Nagoya Math. J.} 2(1951), 95--110.

\bibitem [Mat2]{Mat2} Matsushima, Y., Espaces homog\`enes de Stein des groupes de Lie complexes, {\it Nagoya Math. J.} 16(1960), 205--218.

\bibitem [Mat3]{Mat3} Matsushima, Y., Espaces homog\`enes de Stein des groupes de Lie complexes, II, {\it Nagoya Math. J.} 18(1961), 153--164.

\bibitem[MZ]{MZ}  Montgomery, D. and Zippin, L., {\it Topological Transformation Groups}, Interscience Publishers, New York, 1955.

\bibitem[M]{M} Morimoto, A., On the classification of noncompact complex Abelian Lie groups, {\it Trans. Amer. Math. Soc.} 123(1966), 200--228.

\bibitem[MFK]{MFK}  Mumford, D., Fogarty, J. and Kirwan, F. {\it Geometric Invariant Theory}, Results in Mathematics and Related Areas (2), 34, Springer, Berlin, 1994.

\bibitem[OR]{OR} Oeljeklaus, K. and Richthofer, W., Recent results on homogeneous complex manifolds, Complex analysis, III (College Park, Md., 1985--86), 78--119, Lecture Notes in Math., 1277, Springer, Berlin, 1987. 

\bibitem[O]{O} Onishchik, A. L., Complex hulls of complex homogeneous spaces (translated from Russian), {\it Soviet Math. Dokl.} 1(1960), 88--91.

\bibitem[OV]{OV} Onishchik, A. L. and Vinberg, E. B., {\it Structure of Lie Groups and Lie Algebras} (translated from Russian), Encyclopaedia of Mathematical Sciences, 41, Lie groups and Lie algebras III, Springer, Berlin, 1994.

\bibitem[P]{P} Palais, R., {\it A Global Formulation of the Lie Theory of Transformation Groups}, Memoirs of the Amer. Math. Soc., 22, 1957.

\bibitem[R]{R} Remmert, R., Holomorphe und meromorphe Abbildungen komplexer R\"aume, {\it Math. Ann.} 133(1957), 328--370.

\bibitem[S]{S} Snow, D., Reductive group actions on Stein spaces, {\it Math. Ann.} 259(1982), 79-97.

\bibitem[Ti]{Ti} Tits, J., Espaces homog\`enes complexes compacts, {\it Comment Math. Helv.} 37(1962), 111--120.

\bibitem[To]{To} Totaro, B., The automorphism group of an affine quadric, {\it Math. Proc. Cambridge Philos. Soc.}, 143(2007), 1--8.

\bibitem[TV]{TV} Toth, A. and Varolin, D., Holomorphic diffeomorphisms of complex semisimple Lie groups, {\it Invent. Math.} 139(2000), 351--369.

\bibitem[Vara]{Vara} Varadarajan, V., {\it Lie groups, Lie algebras, and their representations}, Prentice Hall, Englewood Cliffs, 1974.

\bibitem[Varo1]{Varo1} Varolin, D., A general notion of shears, and applications, {\it Michigan Math. J.} 46(1999), 533-553. 

\bibitem[Varo2]{Varo2} Varolin, D., The density property for complex manifolds and geometric structures, {\it J. Geom. Anal.} 11 (2001), 135--160.

\bibitem[Varo3]{Varo3} Varolin, D., The density property for complex manifolds and geometric structures, II, {\it Internat. J. Math.} 11(2000), 837--847.

\bibitem[Wi]{Wi} Winkelmann, J., On automorphisms of complements of analytic subsets in $\CC^n$, {\it Math. Z.} 204(1990), 117--127.

\bibitem[W\"u]{Wu} W\"ustner, M., On the surjectivity of the exponential function of solvable Lie groups, {\it Math. Nachr.} 192(1998), 255--266.

\end {thebibliography}

Fakult\"at f\"ur Mathematik, Ruhr-Universit\"at Bochum, Universit\"atsstra\ss e 150, D-44801 Bochum, Germany \hspace{0.1cm}$\bullet$\hspace{0.1cm} {\tt e-mail: ahuck@cplx.ruhr-uni-bochum.de}
\vspace{1cm}

Department of Mathematics, The Australian National University, Canberra, ACT 0200, Australia \hspace{0.1cm}$\bullet$\hspace{0.1cm} {\tt e-mail: alexander.isaev@maths.anu.edu.au}

\end {document}